\title{A doubling construction for Williamson matrices}
\author{Curtis Bright\\University of Waterloo}
\newtheorem{theorem}{Theorem}
\DeclareMathOperator{\PAF}{PAF}
\DeclareMathOperator{\PCF}{PCF}
\let\brace\undef
\DeclarePairedDelimiter{\floor}{\lfloor}{\rfloor}
\DeclarePairedDelimiter{\brace}{\lbrace}{\rbrace}
\DeclarePairedDelimiter{\paren}{\lparen}{\rparen}
\newcommand{\shuffle}{\mathbin{\text{\fontencoding{OT2}\selectfont x}}}
\begin{document}
\maketitle
\begin{abstract}
A construction that generates Williamson matrices of order $2n$
from Williamson matrices of odd order $n$ is presented.
The construction is completely constructive and only uses three
simple sequence operations.
\end{abstract}

\section{Introduction}

Four square, symmetric, and circulant matrices of order $n$ with $\pm1$ entries
are known as \emph{Williamson matrices} if they satisfy
\[ A^2 + B^2 + C^2 + D^2 = 4nI_n \]
where $I_n$ is the identity matrix of order $n$.  Such matrices were
first introduced by \cite{williamson1944hadamard},
who proved that such matrices can be used to constuct a
Hadamard matrix (a square matrix with $\pm1$ entries
whose rows are pairwise orthogonal) of order~$4n$.
Since Williamson matrices are circulant they are defined in terms of their
first row and so it is convenient to instead think of Williamson matrices in terms of four
sequences $(a_0,\dotsc,a_{n-1})$, $(b_0,\dotsc,b_{n-1})$, $(c_0,\dotsc,c_{n-1})$,
$(d_0,\dotsc,d_{n-1})$.
Since Williamson matrices are symmetric these sequences are also symmetric
(i.e., $x_k=x_{n-k}$ for $k=1$, $\dotsc$, $n-1$).

\section{Preliminaries}

A doubling construction for Hadamard matrices was originally given by
\cite{sylvester1867} who showed that a Hadamard matrix of order~$2n$
can be constructed from a Hadamard matrix of order~$n$.
\cite{baumert1965hadamard} provided a doubling construction for
generalizations of Williamson matrices which are often referred to as
\emph{Williamson-type} matrices~\cite[Def.~3.3]{seberry1992hadamard}.
Using complex Hadamard matrices, \cite{turyn1970} provided a construction which
generates Williamson matrices of order $2^kn$ for $k=1$, $2$, $3$, $4$
from Williamson matrices of odd order $n$.
In this paper we provide a simple doubling construction
which works directly on the sequences which define Williamson matrices.

\subsection{Correlation}

Williamson matrices can also be defined in terms of a correlation function.
The \emph{periodic cross-correlation function} of two sequences
$X=(x_0,\dotsc,x_{n-1})$ and $Y=(y_0,\dotsc,y_{n-1})$ is defined to be
\[ \PCF_{X,Y}(s) \coloneqq \sum_{k=0}^{n-1} x_k y_{k+s\bmod n} \]
and the \emph{periodic autocorrelation function} of $X$ be a sequence is
$\PAF_X(s) \coloneqq \PCF_{X,X}(s)$.
In~\cite[\S3.1.1]{brightthesis} it is shown that four symmetric sequences
$A$, $B$, $C$, $D\in\brace{\pm1}^n$ form the initial rows of
a set of Williamson matrices if and only if they satisfy
\[ \PAF_A(s) + \PAF_B(s) + \PAF_C(s) + \PAF_D(s) = 0 \]
for $s=1$, $\dotsc$, $\floor{n/2}$.
We refer to such sequences as \emph{Williamson sequences}.

\subsection{Sequence operations}\label{sec:seqop}

Let $A=(a_0,\dotsc,a_{n-1})$ and $B=(b_0,\dotsc,b_{n-1})$
be sequences of order $n$.  Our construction
uses the following 3 types of operations.
\begin{enumerate}
\item Negation.  Individually negate each entry of $A$, i.e., $-A\coloneqq(-a_0,\dotsc,-a_{n-1})$.
\item Shift.  Cyclically shift the entries of $A$ by an offset of $k$,
i.e., $(a_k,a_{k+1},\dotsc,a_{k-1})$ with indices taken modulo $n$.
\item Interleave.  Interleave the entries of $A$ and $B$ in a perfect shuffle, i.e.,
\[ A\shuffle B \coloneqq (a_0,b_0,a_1,b_1,\dotsc,a_{n-1},b_{n-1}) . \]
\end{enumerate}
If $n$ is odd we let $A'$ denote shifting $A$ by an offset of $(n-1)/2$, i.e.,
\[ A' \coloneqq (a_{(n-1)/2},\dotsc,a_{n-1},a_0,a_1,\dotsc,a_{(n-3)/2}) . \]
Note that we have $\PAF_{-A}(s)=\PAF_A(s)$, $\PAF_{A'}(s)=\PAF_A(s)$, and
\[ \PAF_{A\shuffle B}(s) = \begin{cases}
\PAF_A(s/2)+\PAF_B(s/2) & \text{when $s$ is even}, \\
\PCF_{A,B}\paren[\big]{\frac{s-1}{2}}+\PCF_{B,A}\paren[\big]{\frac{s+1}{2}} & \text{when $s$ is odd} .
\end{cases} \]

\section{Doubling construction}

Our doubling construction is captured by the following theorem.

\begin{theorem}
Let $A$, $B$, $C$, $D$ be Williamson sequences of odd order $n$.
Then
{\rm\[ A\shuffle B',\, (-A)\shuffle B',\, C\shuffle D',\, (-C)\shuffle D' \]}%
are Williamson sequences of order $2n$.
\end{theorem}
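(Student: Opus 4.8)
My plan is to verify directly that each of the four sequences of order $2n$ satisfies the definition of a Williamson sequence: it has $\pm 1$ entries, and the sum of the four periodic autocorrelations vanishes for $s = 1, \dotsc, \floor{2n/2} = n$. The $\pm 1$ condition is immediate, since negation, cyclic shifting, and the perfect shuffle all carry $\pm 1$ sequences to $\pm 1$ sequences; the symmetry clause, where it must be checked, would be handled via the relations $x_k = x_{n-k}$ for $A, B, C, D$ together with the fact that $X'$ is the shift of $X$ by exactly $(n-1)/2$. So the real content is the autocorrelation identity, which I would prove by splitting on the parity of $s$ and feeding everything through the formula for $\PAF_{X \shuffle Y}$ stated in Section~\ref{sec:seqop}.

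For even $s = 2t$ --- and since $n$ is odd these are exactly the values $t = 1, \dotsc, (n-1)/2 = \floor{n/2}$ --- the formula gives $\PAF_{A \shuffle B'}(2t) = \PAF_A(t) + \PAF_{B'}(t)$, and similarly for the other three sequences. Using $\PAF_{-A} = \PAF_A$, $\PAF_{-C} = \PAF_C$, and the shift-invariance $\PAF_{B'} = \PAF_B$, $\PAF_{D'} = \PAF_D$, the four terms collapse to $2\,(\PAF_A(t) + \PAF_B(t) + \PAF_C(t) + \PAF_D(t))$, which is $0$ because $A, B, C, D$ are Williamson sequences of order $n$. In this range the negation $A \mapsto -A$ and the half-shift $B \mapsto B'$ have no effect on the autocorrelation.

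For odd $s = 2t + 1$ --- the values $t = 0, \dotsc, (n-1)/2$ --- the formula gives $\PAF_{A \shuffle B'}(2t+1) = \PCF_{A,B'}(t) + \PCF_{B',A}(t+1)$ and $\PAF_{(-A)\shuffle B'}(2t+1) = \PCF_{-A,B'}(t) + \PCF_{B',-A}(t+1)$. Since negating one argument of a cross-correlation negates its value, $\PCF_{-A,B'} = -\PCF_{A,B'}$ and $\PCF_{B',-A} = -\PCF_{B',A}$, so the two terms from the $A$--$B'$ pair cancel each other, and the two from the $C$--$D'$ pair cancel in the same way. Hence the sum is $0$ for every odd $s$, with no use of the hypothesis. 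Pairing each shuffled sequence with the shuffle of its negation is precisely what makes the odd-index contributions disappear.

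The computation is short once the shuffle formula is available, so I expect the only real care to be bookkeeping: confirming that the parity split exhausts $s = 1, \dotsc, n$ with the index ranges above --- this is where oddness of $n$ enters --- and, if the symmetric form of the outputs is part of what is claimed, checking that interleaving against the half-shifted $X'$ (rather than $X$ itself) is what yields a symmetric sequence of order $2n$. That last structural point is where I would expect the offset $(n-1)/2$ to do its work, and it is the step I would be most careful with.
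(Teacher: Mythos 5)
Your proposal follows essentially the same route as the paper's proof: the same parity split on $s$, the shuffle formula together with shift- and negation-invariance of $\PAF$ for even $s$, and cancellation of the cross-correlation terms between $X\shuffle Y$ and $(-X)\shuffle Y$ for odd $s$, which is exactly the paper's identity $\PAF_{(-X)\shuffle Y}(s)=-\PAF_{X\shuffle Y}(s)$ spelled out at the $\PCF$ level. The one piece you only flag rather than carry out --- that interleaving with the half-shifted sequence yields symmetric outputs, i.e.\ that the right-hand sequence satisfies $y_k=y_{n-k-1}$, which is the only place the oddness of $n$ is genuinely used --- is also the part the paper dispatches in a single sentence, so your plan matches the paper's argument in both structure and emphasis (and your instinct to be most careful precisely there is well placed).
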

\begin{proof}
The fact that the constructed sequences have $\pm1$ entries are of length $2n$
follows directly from the properties of the three types of operations used to generate them.
The fact that they are symmetric follows from the fact that the
sequences $X$ which appear to the left of $\shuffle$ satisfy $x_k=x_{n-k}$
for $k=1$, $\dotsc$, $n-1$ and the sequences $Y$ which appear to the right
of $\shuffle$ satisfy $y_k=y_{n-k-1}$ for $k=0$, $\dotsc$, $n-1$ which are
exactly the necessary properties for $X\mspace{-1mu}\shuffle\mspace{1mu}Y$ to be symmetric.

Let $L$ be the list containing the constructed sequences of order $2n$.
To show these sequences are Williamson we need to show that
\[ \sum_{X\in L}\PAF_X(s) = 0 \]
for $s=1$, $\dotsc$, $n$.  When $s$ is even and in this range using the properties from Section~\ref{sec:seqop}
we obtain
\[ \sum_{X\in L}\PAF_X(s) = 2\sum_{X=A,B,C,D}\PAF_X(s/2) = 0 \]
since $A$, $B$, $C$, $D$ are Williamson.  When $s$ is odd we have that
\[ \PAF_{(-X)\shuffle Y}(s) = -\PAF_{X\shuffle Y}(s) \]
and using this for $(X,Y)=(A,B')$ and $(C,D')$ derives the desired property.
\end{proof}

We remark that unlike the doubling constructions given by Sylvester
and Baumert--Hall our doubling construction cannot be applied repeatedly
because it only applies when~$n$ is odd.  When $n$ is even it is not
possible to apply a shift to a symmetric sequence~$Y$ of order~$n$ to obtain a sequence
$Y'$ which satisfies $y'_k=y'_{n-k-1}$ for $k=0$, $\dotsc$, $n-1$ and this
property is necessary to make the constructed sequences symmetric.

\bibliography{williamson-doubling}

\end{document}